\lstdefinestyle{mystyle}{
  showspaces=false,                
  showstringspaces=false,
  showtabs=false,                  
  tabsize=2,
  basicstyle=\ttfamily\footnotesize,
  breakatwhitespace=false
  }
\newcommand{\tensor}{\otimes}
\newcommand{\iso}{\cong}
\newcommand{\lscat}[1]{\mathscr{#1}}
\newcommand{\scat}[1]{\mathcal{#1}}
\newcommand{\E}{\mathbb{E}}
\newcommand{\F}{\mathbb{F}}
\renewcommand{\L}{\scat{L}}
\newcommand{\T}{\mathbb{T}}
\newcommand{\One}{\lscatfont{1}}
\newcommand{\Nat}{\mathbb{N}}
\newcommand{\lscatfont}[1]{{\boldsymbol{#1}}}
\newcommand{\Set}{{\lscatfont{\mathcal{S}\hspace{-.5mm}\mathit{et}}}}
\newcommand{\FF}{\lscatfont{\mathcal{F}}}
\newcommand{\id}{\mathrm{id}}
\newcommand{\comp}{\circ}
\newcommand{\icomp}{\,}
\newcommand{\ord}[1]{[#1]}
\newcommand{\setof}[1]{\{ #1 \}}
\newcommand{\suchthat}{\mid}
\newcommand{\op}{\mathrm{op}}
\newcommand{\Def}[1]{\emph{#1}}
\newcommand{\Id}{\mathrm{Id}}
\newcommand{\AbsClon}{\lscatfont{\mathcal{AC}}}
\newcommand{\SubstAlg}{\lscatfont{\mathcal{S\!A}}}
\newcommand{\Var}{\mathrm{V}}
\newcommand{\old}{\mathrm{old}}
\newcommand{\new}{\mathrm{new}}
\newcommand{\weak}{\mathrm{w}}
\newcommand{\symm}{\mathrm{s}}
\newcommand{\swap}{\symm}
\newcommand{\cont}{\mathrm{c}}
\newcommand{\monswap}{\underline{\swap}}
\newcommand{\monweak}{\underline{\weak}}
\newcommand{\moncont}{\underline{\cont}}
\newcommand{\Endo}{\mathrm{Endo}}
\newcommand{\str}{\mathrm{str}}
\newcommand{\ACtoSA}{\mathrm{S}}
\newcommand{\SAtoAC}{\mathrm{C}}
\newcommand{\pt}{^\bullet}
\author{Marcelo Fiore and Sanjiv Ranchod}
\thanks{Marcelo Fiore was partially supported by EPSRC grant EP/V002309/1.
  Sanjiv Ranchod was supported by the Gates Cambridge Trust.}
\address{Department of Computer Science and Technology, University of
  Cambridge, William Gates Building, 15 JJ Thomson Avenue, Cambridge CB3 0FD,
  UK.}
\title{A Finite Algebraic Presentation of Lawvere Theories\\ in the
  Object-Classifier Topos}
\keywords{Lawvere theory, algebraic theory, algebraic category, equational
  presentation, abstract clone, simultaneous substitution, symmetric monoid,
  symmetric monad, symmetric distributive law, single-variable substitution,
  object-classifier topos} 
\begin{document}

\maketitle

\begin{center}
  \emph{Dedicated to Bill Lawvere with gratitude and admiration}
\end{center}

\bigskip

\begin{abstract}
Over the topos of sets, the notion of Lawvere theory is infinite
countably-sorted algebraic but not one-sorted algebraic.  
Shifting viewpoint over the object-classifier topos, a finite algebraic
presentation of Lawvere theories is considered.
\end{abstract}

\section{Introduction}
\label{Section:Introduction}

The notion of Lawvere theory was introduced by Bill Lawvere in his seminal
PhD~thesis~\cite[Chapter~II, Section~1]{LawverePhDThesis}.  It initiated the
subject of categorical algebra while transforming the subject of universal
algebra~(\cite{Wraith,AdamekRosickyVitale}).

A pivotal role in the definition of Lawvere theory is played by the category
$\E = \F^\op$, for $\F$ the category with objects the set of natural numbers
$\Nat$ and morphisms $m\to n$ in $\F$ given by functions $\ord m \to \ord n$
where $\ord \ell = \setof{\, k\in\Nat \suchthat k<\ell \,}$.

\begin{definition}
A \Def{Lawvere theory} is a pair $(\L,L)$ with $L: \E \to \L$ an
identity-on-objects finite-product-preserving functor.
\end{definition}

Straightaway, Bill Lawvere introduced and studied the category 
of Lawvere theories.  A Lawvere-theory morphism is a functor between co-slices
under $\E$.
The most basic example of a Lawvere theory is the initial one $(\E,\Id_\E)$.
The terminal Lawvere theory $(\T,T)$ arises from the identity-on-objects and
fully-faithful factorization $\E\to\T\to\One$ of the unique functor from $\E$
to the terminal category $\One$.

The connection with universal algebra was established by Bill Lawvere from the
outset.  For instance, he showed, both in general and in examples, how Lawvere
theories may be described by means of equational
presentations~\cite[Chapter~II, Section~2]{LawverePhDThesis}.  
%
%
In this paper, 
we will use the notation $x_1,\ldots,x_n\vdash t = u$ for the equation that
identifies the terms $t$ and $u$, both with free variables amongst
$x_1,\ldots,x_n$.

The initial Lawvere theory, being presented by no operations and no equations,
was defined by Bill Lawvere as the \Def{theory of equality}.  
On the other hand, the terminal Lawvere theory may be presented by means of
any non-empty set of constants $O$ subject to the equations $x\vdash o=x$ for
each constant $o$ in $O$.  This example displays an important contrast, often
emphasized by Bill Lawvere, between Lawvere theories and equational
presentations: the former are representation independent.  Indeed, there are
in general many equational presentations for the same Lawvere theory and each
Lawvere theory determines one variety of algebraic structure or algebraic
category~\cite[Chapter~III]{LawverePhDThesis}.

Bill Lawvere defined a Lawvere theory $(\L,L)$ to be inconsistent whenever
$\L$ is equivalent to either the terminal category 
or the arrow category. 
The terminal theory $(\T,T)$ is an example of the first kind.  An example of
the second kind is the theory 
presented by no operators subject to the equation $x,y\vdash x = y$.  This is
a non-initial Lawvere sub-theory of the terminal Lawvere theory.  Therefore,
the category of Lawvere theories has a non-initial sub-terminal object. The
fact below then follows from~\cite[Lemma~11.19]{AdamekRosickyVitale}. 

\begin{lemma}
The category of Lawvere theories 
is not one-sorted algebraic. 
\end{lemma}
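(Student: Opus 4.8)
The plan is to exhibit the Lawvere theory presented by no operators subject to the single equation $x,y \vdash x = y$ --- call it $(\L',L')$ --- as a sub-terminal object of the category of Lawvere theories that is neither initial nor terminal, and then to appeal to~\cite[Lemma~11.19]{AdamekRosickyVitale}, according to which every sub-terminal object of a one-sorted algebraic category is either initial or terminal. Exhibiting a single sub-terminal object that is neither then shows, by contraposition, that the category of Lawvere theories cannot be one-sorted algebraic.

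First I would pin down the category $\L'$ by computing its hom-sets. Because $\ord n$ is the $n$-fold product of $\ord 1$ in any Lawvere theory, a morphism $\ord m \to \ord n$ is an $n$-tuple of morphisms $\ord m \to \ord 1$, so it suffices to count the $m$-ary operations $\ord m \to \ord 1$. In the initial theory these are exactly the $m$ projections, namely the variables $x_1,\dots,x_m$; the imposed equation identifies any two variables, so for $m \geq 1$ there is a unique such operation, while for $m = 0$ there are none, no constant having been adjoined. Consequently $\L'(\ord m, \ord n)$ has at most one element for every $m$ and $n$; that is, $\L'$ is a thin category (a preorder).

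From thinness I would deduce that $(\L',L')$ is sub-terminal. Any Lawvere-theory morphism $(\L,L) \to (\L',L')$ is a finite-product-preserving functor whose action on objects is fixed by the structure maps from $\E$; since $\L'$ is thin, such a functor is uniquely determined wherever it is defined, so there is at most one morphism $(\L,L) \to (\L',L')$, whence the canonical map to the terminal theory is monic. It then remains to separate $(\L',L')$ from the two endpoints. It is non-initial because the two projections $\ord 2 \to \ord 1$, distinct in $\E$, are identified in $\L'$; and it is non-terminal because $\ord 0$ and $\ord 1$ are non-isomorphic in $\L'$ --- there is no morphism $\ord 0 \to \ord 1$ --- whereas they are isomorphic in $\T$.

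I expect the main obstacle to be the sub-terminality argument rather than the bookkeeping at the endpoints. One must check carefully that thinness of $\L'$ genuinely yields at most one morphism into $(\L',L')$, keeping track of the constraint that such a morphism exists only when the source theory has no constants, so that the empty hom-sets $\L'(\ord 0, \ord n)$ with $n \geq 1$ obstruct existence but never threaten uniqueness; and that the cited lemma is invoked in the form ``sub-terminal $\Rightarrow$ initial or terminal.'' Once $(\L',L')$ is seen to be a sub-terminal object distinct from both $(\E,\Id_\E)$ and $(\T,T)$, the conclusion is immediate from~\cite[Lemma~11.19]{AdamekRosickyVitale}.
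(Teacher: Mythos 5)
Your proposal is correct and follows essentially the same route as the paper: exhibit the theory presented by no operators and the equation $x,y\vdash x=y$ as a sub-terminal object of the category of Lawvere theories that is neither initial nor terminal, then invoke~\cite[Lemma~11.19]{AdamekRosickyVitale}. Your computation of the hom-sets of $\L'$ and the thinness argument for sub-terminality supply exactly the details the paper leaves implicit.
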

\noindent
In other words, the language of (one-sorted) universal algebra is not
expressive enough to describe itself.

It is well-known, however, that the category of Lawvere theories is
countably-sorted algebraic; see, for
instance,~\cite[Remark~14.25(1)]{AdamekRosickyVitale}.  
One way to see this in direct connection to universal algebra is by means of
the notion of \emph{abstract clone}; for which see, for
instance,~\cite{Cohn,Taylor93, Gratzer}.  
In the context of many-sorted universal algebra, we will use the notation
${x_1:s_1},\ldots,{x_n:s_n} \vdash t = u:s$ for the equation that identifies
the terms $t$ and $u$ of sort $s$, both with free variables amongst
$x_1,\ldots,x_n$ respectively of sort $s_1,\ldots,s_n$.

The countably-sorted equational presentation of abstract clones has set of
sorts $\Nat$ and operators
\[
\mu_{m, n} : m , \underbrace{n,\ldots,n}_{\text{$m$ times}} \to n
  \enspace(m, n \in \Nat)
\enspace,\quad
\iota^m_i : m \enspace\big(m \in \Nat, i\in\ord m\big)
\]
subject to the equations
\[\begin{array}{l}
\begin{array}{l}
x:\ell, y_0:m, \ldots, y_{\ell-1}:m, z_0:n , \ldots, z_{m-1}:n
\\[1mm]
\quad \vdash \ 
\mu_{m,n}( \mu_{\ell,m}(x, y_0, \ldots, y_{\ell-1} ), z_0, \ldots, z_{m-1} ) 
\\[1mm]
\quad\qquad = \
\mu_{\ell,n}
  ( x , 
    \mu_{m, n}( y_0, z_0, \ldots, z_{m-1} ), 
    \ldots, 
    \mu_{m, n}( y_{\ell-1}, z_0, \ldots, z_{m-1} )
    )
: n
\end{array}
\\[9mm]
\begin{array}{l}
x_0:n, \ldots, x_{m-1}:n
\vdash \mu_{m, n}( \iota^{m}_i, x_0, \ldots, x_{m-1} ) =  x_i : n
\end{array}
\\[3mm]
\begin{array}{l}
x:m 
\vdash \mu_{m, m}( x, \iota^{m}_0, \ldots, \iota^{m}_{m-1} ) =  x  : m
\end{array}
\end{array}\]
The idea behind this axiomatization is that the operators $\mu_{m,n}$ model
simultaneous substitution (or cartesian multi-composition) while the constants
$\iota^n_i$ model variables (or projections).

The categories of Lawvere theories 
and of abstract clones 
are equivalent\footnote{This was in fact known to Bill Lawvere from the
  outset.  Indeed, he once mentioned to the first author at a category theory
  conference that he had actually come up with the notion of abstract clone in
  the process of developing the notion of Lawvere theory.}; see, for
instance,~\cite[Appendix]{Taylor73}.
Concisely put, the abstract clone of a Lawvere theory $\L$ consists of the
sorted family of sets $\setof{\, \L(n,1) \,}_{n\in\Nat}$ equipped with the
operations
\[
  \L(m,1) \times \L(n,1)^m 
  \stackrel\iso\longrightarrow
  \L(m,1) \times \L(n,m) 
  \stackrel\comp\longrightarrow
  \L(n,1)
  \quad
  (m, n \in \Nat)
\]
\[
  \pi_{i+1}
    \in \L(m,1)
  \quad
  \big(m \in \Nat, i\in\ord m\big)
\]
while the Lawvere theory of an abstract clone 
\[
  \setof{\, C_n \,}_{n\in\Nat}
  \enspace,\quad 
  \mu_{m,n} : C_m \times (C_n)^m \to C_n \enspace (m,n \in\Nat)
  \enspace,\quad 
  \iota^m_i \in C_m \enspace \big(m \in \Nat, i\in\ord m\big)
\]
has hom-sets $(C_m)^n$, from $m$ to $n$ in $\Nat$, with composition
\[
  (C_m)^n \times (C_\ell)^m
  \xrightarrow{\ \langle \pi_k \times \id \rangle_{k=1,\dots,n}\ }
  \big( C_m \times (C_\ell)^m \big)^n
  \xrightarrow{\ (\mu_{m,\ell})^n\ }
  (C_\ell)^n
  \quad
  (\ell,m,n\in\Nat)
\]
and identities
\[
  (\iota^m_0,\ldots,\iota^m_{m-1}) \in (C_m)^m
  \quad
  (m\in\Nat)
\]

\medskip
We have so far confined our discussion of categorical algebra to universal
algebra; that is, on sets.  Of course, one of the benefits of the categorical
approach is the generalization to other realms.  This was recognized early on
by Bill Lawvere.  In particular, in~\cite{LawvereOrdSum}, he put forward the
study of more general equational structure emphasizing that this necessitates
operations with both arities and co-arities, where the latter embody
generalized tupling, parameterization, or indexing.  
In this paper, we consider such algebraic structure in the object-classifier
topos $\FF=\Set^\F$.  Specifically, we present a finite equational
presentation of Lawvere theories over $\FF$.  This result is implicit
in~\cite{FPT} Theorem~3.3 and Proposition~3.4, and we use this occassion to
dedicate it to Bill Lawvere.  
The corresponding \emph{theory of substitution} has two operators,
respectively with arity-coarity pairs $(\,V+1\,,\,1\,)$ and $(\,0\,,\,V\,)$,
where $V = \F(1,-)$ is the universal object model, subject to four equations.
A substitution algebra is then an object $A$ in $\FF$ together with operations
$s: A^V \times A \to A$ and $v: 1 \to A^V$ satisfying natural laws when $s$ is
understood as modelling (single variable) substitution and $v$ as (generic)
variables.  The use of the non-standard arity $V+1$ and of the non-standard
co-arity $V$ is fundamental.  

The present development streamlines that of~\cite{FPT} from the viewpoint of a
universal characterization of 
$\F$ as a monoidal theory that emphasizes its structural properties
(weakening, contraction, and exchange) and serves as the conceptual
foundation~(c.f.~\cite{FOSSACS05, MFPS06}).  
In this vein, Sections~\ref{Section:SymmetricMonoidsAndMonads}
and~\ref{Section:SymmetricDistributiveLaws} introduce the necessary theory of
symmetric (co)monads and distributive laws involving them.  
Within this framework, Section~\ref{Section:SubstitutionAlgebras} recasts and
generalizes the notion of substitution algebra. 
Finally, Section~\ref{Section:IsomorphismTheorem} outlines, for the first time
in print, the isomorphism between abstract clones and substitution algebras.
This establishes the main aim of the paper in providing a finite equational
presentation of Lawvere theories in the object-classifier topos.  From the
viewpoint of many-sorted universal algebra, it may be reinterpreted as
providing a countably-sorted algebraic presentation of Lawvere theories by an
axiomatization of single-variable substitution.

\medskip
Along the above lines of enquiry, we leave open the conjecture that the notion
of Lawvere theory is truly infinite countably-sorted algebraic, 
in that there are no countably-sorted equational presentations of Lawvere
theories with either a finite set of sorts, a finite set of operators, or a
finite set of equations.

\section{Symmetric monoids and monads}
\label{Section:SymmetricMonoidsAndMonads}

\subsection{} \label{Subsection:MainSymmetricMonoid}
In the spirit of~\cite{LawverePhDThesis}, in~\cite{FPT}, the category $\F$ is
viewed as the free cocartesian category on an object $1$ with a chosen
(strict) coproduct structure 
\[\begin{tikzcd}[ampersand replacement=\&]
	{n} \& {n + 1} \& {1}
	\arrow["{\old_n}", from=1-1, to=1-2]
	\arrow["{\new_n}"', from=1-3, to=1-2]
\end{tikzcd}\] 
Via this coproduct structure, every morphism in $\F$ may be described using
the following generating morphisms:
\begin{align*}
\cont = \; & [\id_1 , \id_1 ] : 2 \to 1
\\
\weak = \; & \old_0 : 0 \to 1
\\
\symm = \; & [ \new_1 , \old_1 ] : 2 \to 2
\end{align*}
As is well-known, the first two maps equip $\F$ with the monad 
$( {\Id+1}, \id+\weak, \id+\cont )$ taking a coproduct with $1$.  This
description, however, overlooks the map $\symm$.  To account for 
it, 
we consider the work of~\cite{Grandis} where $\F$ is instead viewed as the
free strict monoidal category on a chosen ``symmetric monoid''.

\begin{definition}
Let $(\lscat{C}, \tensor, I)$ be a monoidal category. A \Def{symmetric monoid}
$(A, c, w, s)$ in $\lscat{C}$ consists of an object $A$ of $\lscat{C}$ and
morphisms $c: A\otimes A \to A$, $w: I \to A$, and 
$s: A\otimes A \to A \otimes A$ satisfying the following commutative
diagrams (where associators and unitors have been omitted):
\[\begin{tikzcd}[ampersand replacement=\&]
  {A^{\tensor 3}} \& {A^{\tensor 2}} \& A \& {A^{\tensor 2}} \& 
  {A^{\tensor 2}} \& {A^{\tensor 2}} \& {A^{\tensor 2}} \& {A^{\tensor 2}} \\ 
  {A^{\tensor 2}} \& A \& {A^{\tensor 2}} \& A \&\& A \&\&
  {A^{\tensor 2}}
	\arrow["{c\tensor \id}", from=1-1, to=1-2]
	\arrow["{\id\tensor c}"', from=1-1, to=2-1]
	\arrow["c"', from=2-1, to=2-2]
	\arrow["c", from=1-2, to=2-2]
	\arrow["{\id\tensor w}", from=1-3, to=1-4]
	\arrow["{w\tensor \id}"', from=1-3, to=2-3]
	\arrow["c", from=1-4, to=2-4]
	\arrow["c"', from=2-3, to=2-4]
	\arrow["s", from=1-5, to=1-6]
	\arrow["c"', from=1-5, to=2-6]
	\arrow["c", from=1-6, to=2-6]
	\arrow["s", from=1-7, to=1-8]
	\arrow["\id"', from=1-7, to=2-8]
	\arrow["s", from=1-8, to=2-8]
	\arrow["\id", from=1-3, to=2-4]
\end{tikzcd}\]
\[\begin{tikzcd}[ampersand replacement=\&]
	{A^{\tensor 3}} \& {A^{\tensor 3}} \& {A^{\tensor 3}} \& A \& 
    {A^{\tensor 2}} \& {A^{\tensor 3}} \& {A^{\tensor 3}} \& {A^{\tensor 3}} \\
	{A^{\tensor 3}} \& {A^{\tensor 3}} \& {A^{\tensor 3}} \&\& {A^{\tensor 2}} \&
  {A^{\tensor 2}} \&\& {A^{\tensor 2}}
	\arrow["{s\tensor \id}", from=1-1, to=1-2]
	\arrow["{\id\tensor s}", from=1-2, to=1-3]
	\arrow["{\id\tensor s}"', from=1-1, to=2-1]
	\arrow["{s\tensor \id}"', from=2-1, to=2-2]
	\arrow["{\id\tensor s}"', from=2-2, to=2-3]
	\arrow["{s\tensor \id}", from=1-3, to=2-3]
	\arrow["{w\tensor \id}", from=1-4, to=1-5]
	\arrow["{\id\tensor w}"', from=1-4, to=2-5]
	\arrow["s", from=1-5, to=2-5]
	\arrow["{s\tensor \id}", from=1-6, to=1-7]
	\arrow["{ \id\tensor s}", from=1-7, to=1-8]
	\arrow["{ \id\tensor c}"', from=1-6, to=2-6]
	\arrow["s"', from=2-6, to=2-8]
	\arrow["{c\tensor \id}", from=1-8, to=2-8]
\end{tikzcd}\]
\end{definition}
Note, interestingly, that this definition does not require the tensor product
to be braided and, when it is, it may be specialized to the expected
definition of a commutative monoid.  

\subsection{Symmetric monads, comonoids, and comonads}
Recalling that the category of endofunctors $\Endo(\lscat{C})$ on a category
$\lscat{C}$ is strict monoidal,
the above allows for the definition of a \Def{symmetric monad} on $\lscat{C}$
as a symmetric monoid in $\Endo(\lscat{C})$.  There are, of course,
appropriate dual definitions of a \emph{symmetric comonoid} and
\emph{symmetric comonad}. 

\begin{examples} \label{Example:SymmetricMonoidMonad}
\begin{enumerate}
\item 
For a symmetric monoid $A$, the tensoring with $A$ functor $(-)\otimes A$ is a
symmetric monad.
In particular, as every object $A$ in a cartesian monoidal category is
canonically a symmetric comonoid, the product with $A$ comonad $(-)\times A$
is canonically symmetric.

\item 
For a symmetric comonoid $A$ in a monoidal closed category, the internal-hom
functor $[A,-]$ is a symmetric monad.
In particular, for every object $A$ in a cartesian monoidal closed category,
the exponentiation by $A$ monad $(-)^A$ is canonically symmetric.

\item 
Monoidal functors preserve symmetric (co)monoids.  
In particular, by 
the 
convolution monoidal structure~(\cite{Day,ImKelly}), representable presheaves
of symmetric (co)monoids are symmetric (co)monoids.

\item \label{Example:ConcreteSymmetricMonoidMonad}
A concrete class of examples arises from the general ones above as follows.

Let $(A,c,w,s)$ be a symmetric monoid in a monoidal small category $\lscat C$.
Then, the representable $R_A = \lscat C(A,-)$ in $\Set^{\lscat C}$ is a
symmetric comonoid for the convolution monoidal structure and the convolution
internal-hom functor $[R_A,-]$ on $\Set^{\lscat C}$ is a symmetric monad.
In fact, it is canonically isomorphic to the symmetric monad 
$(-\tensor A)^\star = \Set^{(-)\tensor A}$ on $\Set^{\lscat C}$ with the
simple description below:
\begin{align*}
\big(\!-\tensor A\big)^\star(X) =\ 
& X(-\tensor A) 
\\[1mm]
\underline c_X = \ 
& X(- \tensor c) : X(-\tensor A\tensor A) \longrightarrow X(-\tensor A)
\\[1mm]
\underline w_X = \ & X(- \tensor w) : X(-) \longrightarrow  X(-\tensor A)
\\[1mm]
\underline s_X = \ 
& X(- \tensor s) 
  : X(-\tensor A\tensor A) \longrightarrow X(-\tensor A\tensor A)
\end{align*}

\end{enumerate}
\end{examples}

We will need to consider monoidal notions of symmetric monads.

\begin{definition}
Let $(\lscat{C}, \tensor, I)$ be a monoidal category. A \emph{lax monoidal}
symmetric monad on $\lscat{C}$ is a symmetric monad 
$(T, \mu, \eta, \varsigma)$ on $\lscat C$ equipped with a natural
transformation ${\ell_{A, B}: T(A) \otimes T(B) \to T(A \otimes B)}$ and a
morphism ${e : I \to T(I)}$ satisfying, for every ${A, B \in \lscat{C}}$, the
following commutative diagrams
\[\begin{array}{c}
  \begin{array}{c}\begin{tikzcd}[column sep = large, ampersand replacement=\&]
	{T^2(A) \otimes T^2(B)} \& {T(T(A)\otimes T(B))} \& {T^2(A \otimes B)} 
  \\
	{T(A) \otimes T(B)} \&\& {T(A \otimes B)} 
  \\
	\arrow["{\mu_A \otimes \mu_B}"', from=1-1, to=2-1]
	\arrow["{\ell_{T(A), T(B)}}", from=1-1, to=1-2]
	\arrow["{T(\ell_{A, B})}", from=1-2, to=1-3]
	\arrow["{\mu_{A \otimes B}}", from=1-3, to=2-3]
	\arrow["{\ell_{A,B}}"', from=2-1, to=2-3]
\end{tikzcd}\end{array}
\\[-6mm]
\begin{array}{c}\begin{tikzcd}[ampersand replacement=\&]
  {A\otimes B} \\
  {T(A)\otimes T(B)} \& {T(A\otimes B)} \\
	\arrow["{\eta_{A\otimes B}}", from=1-1, to=2-2]
	\arrow["{\eta_A\otimes \eta_B}"', from=1-1, to=2-1]
	\arrow["{\ell_{A,B}}"', from=2-1, to=2-2]
\end{tikzcd}\end{array}
\\[-5mm]
\begin{array}{c}\begin{tikzcd}[column sep = large, ampersand replacement=\&]
	{T^2(A) \otimes T^2(B)} \& {T(T(A)\otimes T(B))} \& {T^2(A \otimes B)} \\
	{T^2(A) \otimes T^2(B)} \& {T(T(A)\otimes T(B))} \& {T^2(A \otimes B)}
	\arrow["{\ell_{T(A), T(B)}}", from=1-1, to=1-2]
	\arrow["{T(\ell_{A, B})}", from=1-2, to=1-3]
	\arrow["{\varsigma_A\otimes \varsigma_B}"', from=1-1, to=2-1]
	\arrow["{\ell_{T(A), T(B)}}"', from=2-1, to=2-2]
	\arrow["{T(\ell_{A, B})}"', from=2-2, to=2-3]
	\arrow["{\varsigma_{A \otimes B}}", from=1-3, to=2-3]
\end{tikzcd}\end{array}
\end{array}\]
as well as expected coherence diagrams involving the unitors and associator of
the monoidal structure.  
\end{definition}

A symmetric monad is called \emph{oplax monoidal} if it satisfies the dual
definition, while it is called \emph{monoidal} in the case that the morphisms
$\ell$ and $e$ are isomorphisms.  
This definition is adapted from~\cite{Kock}, wherein a monad is called lax
monoidal if, in the above, such an $\ell$ and $e$ satisfy the first two
diagrams and the coherence conditions. 
(We caution that our notion differs from 
Kock's \emph{symmetric monoidal monad}, for which ``symmetric'' refers to
the monoidal tensor, rather than the monad.)

\begin{example} \label{Example:ProductOpLaxSymmetricComonad}
For every object $A$ in a cartesian monoidal category, the symmetric comonad
$(-)\times A$ is oplax monoidal, with structure
\[\begin{tikzcd}[ampersand replacement=\&]
  {X\times X'\times Y} \&\& {X\times X'\times Y\times Y} \& 
  {X\times Y\times X'\times Y} 
  \arrow["{\id\times \id\times \Delta_Y}", from=1-1, to=1-3]
	\arrow["\iso", from=1-3, to=1-4]
\end{tikzcd}\]
\[\begin{tikzcd}[ampersand replacement=\&]
  1 \times Y \& 1
	\arrow[from=1-1, to=1-2]
\end{tikzcd}\]
\end{example}

\subsection{Tensorial strengths}
Recall that monoidal monads have an induced (right)
\emph{tensorial strength}~(\cite{Kock}) defined as 
\[\begin{tikzcd}[ampersand replacement=\&]
	{\str_{A,B} = \ T(A) \otimes B} \& {T(A) \otimes T(B)} \& 
  {T(A\otimes B)}
	\arrow["{\id\otimes\eta_B}", from=1-1, to=1-2]
	\arrow["{\ell_{A,B}}", from=1-2, to=1-3]
\end{tikzcd}\]
that satisfies the following diagrams
\[\begin{tikzcd}[column sep = large, ampersand replacement=\&]
	{T^2(A)\otimes B} \& {T(T(A)\otimes B)} \& {T^2(A\otimes B)} \& 
  \\
	{T(A) \otimes B} \&\& {T(A\otimes B)} 
	\arrow["{\str_{T(A),B}}", from=1-1, to=1-2]
	\arrow["{T(\str_{A,B})}", from=1-2, to=1-3]
	\arrow["{\mu_A\otimes \id}"', from=1-1, to=2-1]
	\arrow["{\str_{A,B}}"', from=2-1, to=2-3]
	\arrow["{\mu_{A\otimes B}}", from=1-3, to=2-3]
\end{tikzcd}\]
\[\begin{tikzcd}[column sep = large, ampersand replacement=\&]
  {A\otimes B} \\
  {T(A) \otimes B} \& {T(A\otimes B)}
	\arrow["{\eta_A \otimes \id}"', from=1-1, to=2-1]
	\arrow["{\str_{A,B}}"', from=2-1, to=2-2]
	\arrow["{\eta_{A\otimes B}}", from=1-1, to=2-2]
\end{tikzcd}\]
and note that in the case that $T$ is a monoidal symmetric monad it also
satisfies
\[\begin{tikzcd}[column sep = large, ampersand replacement=\&]
	{T^2(A)\otimes B} \& {T(T(A)\otimes B)} \& {T^2(A\otimes B)} \\
	{T^2(A)\otimes B} \& {T(T(A)\otimes B)} \& {T^2(A\otimes B)}
	\arrow["{\str_{T(A),B}}", from=1-1, to=1-2]
	\arrow["{T(\str_{A,B})}", from=1-2, to=1-3]
	\arrow["{\varsigma_A\otimes \id}"', from=1-1, to=2-1]
	\arrow["{\varsigma_{A\otimes B}}", from=1-3, to=2-3]
	\arrow["{\str_{T(A),B}}"', from=2-1, to=2-2]
	\arrow["{T(\str_{A,B})}"', from=2-2, to=2-3]
\end{tikzcd}\]

Analogously, there is also a (left) tensorial strength defined as 
\[\begin{tikzcd}[ampersand replacement=\&]
	{\str'_{A,B} = \ A \otimes T(B)} \& {T(A) \otimes T(B)} \& 
  {T(A\otimes B)}
	\arrow["{\eta_A\otimes\id}", from=1-1, to=1-2]
	\arrow["{\ell_{A,B}}", from=1-2, to=1-3]
\end{tikzcd}\]

\begin{example} \label{Example:MainExample}
For every exponentiable object $A$ in a cartesian category, the symmetric
monad $(-)^A$, being a right adjoint, preserves finite products and, in
particular, it is a cartesian-monoidal symmetric monad.  We therefore have
cartesian tensorial strengths:
\[\begin{tikzcd}[ampersand replacement=\&, column sep=large]
  X^A\times Y \& (X\times Y)^A
  \\
  Y \times X^A \& (Y\times X)^A
	\arrow["{\str_{X,Y}}", from=1-1, to=1-2]
	\arrow["{\str'_{Y,X}}"', from=2-1, to=2-2]
  \arrow["{\iso}"', from=1-1, to=2-1]
	\arrow["{\iso}", from=1-2, to=2-2]
\end{tikzcd}\]
\end{example}

\section{Symmetric distributive laws}
\label{Section:SymmetricDistributiveLaws}

\begin{definition}
Let $\lscat{C}$ be a category, $(T, \mu, \eta, \varsigma)$ be a symmetric
monad on $\lscat{C}$, and $F$ be an endofunctor on $\lscat{C}$. A
\emph{symmetric distributive law} is a natural transformation 
$\psi: TF \to FT$ making the following diagrams commute:
\[
\begin{tikzcd}[ampersand replacement=\&]
  {T^2F} \& TFT \& {FT^2} \\
  TF \&\& FT
	\arrow["{T\psi}", from=1-1, to=1-2]
	\arrow["{\psi_T}", from=1-2, to=1-3]
	\arrow["{\mu_F}"', from=1-1, to=2-1]
	\arrow["\psi"', from=2-1, to=2-3]
	\arrow["{F\mu}", from=1-3, to=2-3]
\end{tikzcd}
\qquad\qquad
\begin{tikzcd}[ampersand replacement=\&]
	F \& TF \\
	\& FT 
	\arrow["{\eta_F}", from=1-1, to=1-2]
	\arrow["{F\eta}"', from=1-1, to=2-2]
	\arrow["\psi", from=1-2, to=2-2]
\end{tikzcd}
\]
\[\begin{tikzcd}[ampersand replacement=\&]
	{T^2F} \& TFT \& {FT^2} \\
	{T^2F} \& TFT \& {FT^2}
	\arrow["{T\psi}", from=1-1, to=1-2]
	\arrow["{\psi_T}", from=1-2, to=1-3]
	\arrow["{\varsigma_F}"', from=1-1, to=2-1]
	\arrow["{T\psi}"', from=2-1, to=2-2]
	\arrow["{\psi_T}"', from=2-2, to=2-3]
	\arrow["{F\varsigma}", from=1-3, to=2-3]
\end{tikzcd}\]
\end{definition}

The first two diagrams ask $\psi$ to be a distributive law between the
underlying monad $T$ and the endofunctor $F$, while the third asks $\psi$ to
respect $\varsigma$.  One may similarly define a 
\emph{symmetric codistributive law} between a symmetric comonad $T$ and an
endofunctor $F$.

\begin{examples} \label{Example:DistributiveLaw}
\begin{enumerate}
\item \label{Example:SelfDistributiveLaw}
For a symmetric monad $(T, \mu, \eta, \varsigma)$, the natural transformation
$\varsigma$ is a symmetric distributive law between $T$ and itself. 

\item \label{Example:CoDistributiveLaw}
For an endofunctor $F$ with cartesian strength 
$\str_{A, B}: F(A)\times B \to F(A \times B)$ each component $\str_{(-),B}$ is
a symmetric codistributive law between the symmetric comonad $(-)\times B$ and
the endofunctor $F$. 
\end{enumerate}
\end{examples}

\medskip
We highlight a simple lemma to be needed later that illustrates the tensoring
of two symmetric (co)distributive laws.

\begin{lemma} \label{Lemma:DistLawTensor}
Let $(T, \ell, e, \mu, \eta, \varsigma)$ be an oplax monoidal symmetric
(co)monad on a monoidal category $(\lscat{C}, \otimes, I)$, and let $G_1$ and
$G_2$ be two endofunctors on $\lscat{C}$. If $\psi_1: TG_1 \to G_1T$ and 
$\psi_2: TG_2 \to G_2T$ are symmetric (co)distributive laws, then
\[
	\psi_{1,2} \, =
  \begin{array}{c}
  \big(
  \begin{tikzcd}[ampersand replacement=\&]
	{T(G_1\otimes G_2)} 
  \& {TG_1\otimes TG_2} \& {G_1T\otimes G_2T = (G_1\otimes G_2)T}
	\arrow["{\ell_{G_1, G_2}}", from=1-1, to=1-2]
	\arrow["{\psi_1\otimes \psi_2}", from=1-2, to=1-3]
  \end{tikzcd}
  \big)
  \end{array}
\]
is a symmetric (co)distributive law between the symmetric (co)monad $T$ and
the endofunctor $G_1\otimes G_2$.  
\end{lemma}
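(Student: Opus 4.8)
The plan is to verify directly, at a generic object $X$ of $\lscat{C}$, the three commutative diagrams that define a symmetric distributive law for $\psi_{1,2}$ between $T$ and $H := G_1\tensor G_2$, dealing with the comonadic/codistributive case at the very end by duality. Throughout I will exploit that $\psi_{1,2}$ is, by construction, the composite $(\psi_1\tensor\psi_2)\comp\ell_{G_1,G_2}$, so that each of the three axioms reduces to the corresponding axiom for $\psi_1$ and $\psi_2$ \emph{together with} one of the three oplax-monoidal coherences of $T$ relating $\ell$ to $\mu$, $\eta$, and $\varsigma$ respectively.

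The first reduction I would isolate is a single use of the naturality of $\ell$. Writing $\Phi_i = (\psi_i)_T\comp T\psi_i : T^2G_i \to G_iT^2$ for the two-fold transport of each $\psi_i$, naturality of $\ell$ applied to the pair $(\psi_1,\psi_2)$ gives the identity $(\psi_{1,2})_T\comp T\psi_{1,2} = (\Phi_1\tensor\Phi_2)\comp\ell_{TG_1,TG_2}\comp T\ell_{G_1,G_2}$. This factorization — expressing the right-hand side of the distributive-law square as the tensor of the $\Phi_i$ precomposed with the ``double-$\ell$'' map $\ell_{TG_1,TG_2}\comp T\ell_{G_1,G_2}$ — is the workhorse for both the multiplication and the symmetry axioms.

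For the unit axiom I would use the oplax unit coherence $\ell_{A,B}\comp\eta_{A\tensor B} = \eta_A\tensor\eta_B$ to rewrite $\ell_{G_1,G_2}\comp\eta_H$ as $\eta_{G_1}\tensor\eta_{G_2}$, and then the unit triangles $\psi_i\comp\eta_{G_i} = G_i\eta$; tensoring these yields $\psi_{1,2}\comp\eta_H = H\eta$ directly. The multiplication axiom is the main obstacle, and where the bookkeeping is heaviest. Here I would start from the factorization above, postcompose it with $H\mu = G_1\mu\tensor G_2\mu$, and recognise the two resulting tensor factors $(G_i\mu)\comp\Phi_i$ as the right-hand sides of the multiplication squares for $\psi_1$ and $\psi_2$; invoking those squares replaces each by $\psi_i\comp\mu_{G_i}$, so the whole composite becomes $(\psi_1\tensor\psi_2)\comp(\mu_{G_1}\tensor\mu_{G_2})\comp\ell_{TG_1,TG_2}\comp T\ell_{G_1,G_2}$. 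The oplax multiplication coherence $(\mu_A\tensor\mu_B)\comp\ell_{TA,TB}\comp T\ell_{A,B} = \ell_{A,B}\comp\mu_{A\tensor B}$ then collapses the trailing three maps to $\ell_{G_1,G_2}\comp\mu_H$, which is exactly $\psi_{1,2}\comp\mu_H$. The only genuine care needed is to keep the whiskerings $(-)_T$ and $T(-)$, and the two instances of $\ell$, attached to the correct objects $G_iX$ versus $TG_iX$; once the factorization is in place the rest is forced.

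Finally, for the symmetry axiom I would again begin from the factorization, reduce the two tensor factors using the symmetry squares $\Phi_i\comp\varsigma_{G_i} = G_i\varsigma\comp\Phi_i$ of $\psi_1$ and $\psi_2$, and close with the oplax symmetry coherence $\ell_{TA,TB}\comp T\ell_{A,B}\comp\varsigma_{A\tensor B} = (\varsigma_A\tensor\varsigma_B)\comp\ell_{TA,TB}\comp T\ell_{A,B}$, which lets the double-$\ell$ commute past $\varsigma$. The comonadic statement, with $\psi_1,\psi_2$ symmetric codistributive laws for a symmetric comonad $T$, is formally dual: reversing all $2$-cells in $\Endo(\lscat{C})$ interchanges the roles of $\mu/\eta$ with $\delta/\epsilon$ and turns each of the three verifications above into its mirror image, so that no separate computation is required.
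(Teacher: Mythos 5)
Your verification is correct: the factorization of $(\psi_{1,2})_T \comp T\psi_{1,2} = (\Phi_1\otimes\Phi_2)\comp\ell_{TG_1,TG_2}\comp T\ell_{G_1,G_2}$ via naturality of $\ell$, followed by the unit, multiplication, and symmetry coherences of the oplax monoidal structure, is exactly the routine computation the lemma calls for --- the paper itself states the lemma without proof, so there is no alternative argument to compare against. The one imprecision is your closing duality claim: in the paper's codistributive setting $\ell$ and the $\psi_i$ keep their directions ($T(A\otimes B)\to TA\otimes TB$ and $TG_i\to G_iT$) while only $\mu,\eta$ are replaced by the comultiplication and counit, so the comonad case is not obtained by literally reversing $2$-cells in $\Endo(\lscat{C})$ (which would also reverse $\ell$ and the $\psi_i$); rather, one reruns the same three verifications verbatim with the comonad coherences, which does go through but is a parallel computation rather than a formal dual.
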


\begin{remark} \label{Remark:Lemmas}
In the lemma above, note that if $\ell$ and $\psi_1$, $\psi_2$ are isomorphisms
then so is $\psi_{1,2}$.  
\end{remark}

\begin{definition}
For an endofunctor $F$ on a monoidal category, we let $F\pt$ be the
endofunctor given by 
\[
  F\pt(X) = F(X) \otimes X
\]
\end{definition}

\begin{examples} \label{Example:MainExampleContinued}
\begin{enumerate}
\item
For an endofunctor $F$ with cartesian strength 
$\str_{X,Y}: F(X)\times Y\to F(X\times Y)$, using the oplax cartesian-monoidal
symmetric comonad ${(-)\times Y}$
(Example~\ref{Example:ProductOpLaxSymmetricComonad}) 
and the symmetric codistributive law $\str_{(-),Y}$ between it and $F$
(Example~\ref{Example:DistributiveLaw}(\ref{Example:CoDistributiveLaw})), we
obtain, from Lemma~\ref{Lemma:DistLawTensor}, a symmetric codistributive law
\[
{\str\pt}_{(-),Y}: 
  (-\times Y)\icomp F\pt \to F\pt \icomp(- \times Y)
\]
It further follows that $\str\pt$ is a cartesian strength for $F\pt$;
explicitly, this is given by 
\[
  \!\!\!\!\!\!\!\!\!\!
  F(X)\times X \times Y
  \xrightarrow{\,\id\times\Delta\,} 
  F(X)\times X \times Y \times Y
  \iso 
  F(X)\times Y \times X \times Y
  \xrightarrow{\,\str\times\id\,} 
  F(X\times Y) \times X \times Y
\]

\item
For an oplax monoidal symmetric monad $T$, with symmetry $\varsigma$, from
Example~\ref{Example:DistributiveLaw}(\ref{Example:SelfDistributiveLaw})
and Lemma~\ref{Lemma:DistLawTensor}, we obtain a symmetric distributive law
\[
\underline\varsigma 
  : T \icomp T\pt \to T\pt \icomp T
\]
explicitly given by 
\[
  T\icomp T\pt(X)
  =
  T(T(X)\otimes X)
  \xrightarrow{\,\ell\,}
  TT(X)\otimes T(X)
  \xrightarrow{\,\varsigma\otimes\id\,}
  TT(X)\otimes T(X)
  =
  T\pt \icomp T(X)
\]

\end{enumerate}
\end{examples}

\section{Substitution algebras}
\label{Section:SubstitutionAlgebras}
  
As recalled in the introduction, in~\cite{FPT} the algebraic structure of
(single variable) substitution and (generic) variables on an object $A$ in the
object-classifier topos 
was axiomatized by means of two operations 
\begin{center}
  $A^\Var \times A \to A$ \enspace and \enspace $1 \to A^\Var$
\end{center}
subject to four equational laws (see also the last four equations
in~\ref{SAEquationalPresentation}).  
These capture the following properties:
$(i)$~the substitution on a variable performs the action;
$(ii)$~the substitution of a variable is a contraction;
$(iii)$~the substitution for an absent variable has no effect; 
and 
$(iv)$~the substitution operation is associative.
Next, we reconsider and generalize that definition streamlined in the
framework of the previous two sections.

\begin{definition} \label{Definition:SubstitutionAlgebra}
A \Def{$T$-substitution algebra} for a cartesian-monoidal symmetric monad 
$(T, \ell, e, \mu, \eta, \varsigma)$ on a cartesian category $\lscat C$, is an 
object $A \in \lscat C$ together with morphisms $s: T\pt(A) \to A$ and 
$v: 1 \to T(A)$ in $\lscat C$ such that the following diagrams commute:
\begin{equation}\label{LeftVarAxiom}
\begin{tikzcd}[ampersand replacement=\&]
	{1 \times A} \& 
  \\
  {T\pt(A)} \& A 
  \arrow["{v \times \id}"', from=1-1, to=2-1]
	\arrow["s"', from=2-1, to=2-2]
	\arrow["{\pi_2}", "\iso"', from=1-1, to=2-2]
\end{tikzcd}
\end{equation}
\begin{equation}\label{ContractAxiom}
\begin{tikzcd}[ampersand replacement=\&]
  {T^2(A) \times 1} \&\& {T^2(A)} 
  \\
  {T^2(A) \times T(A)} \& 
  {T\icomp T\pt(A)} \& {T(A)} 
	\arrow["{T(s)}"', from=2-2, to=2-3]
	\arrow["{\pi_1}", "\iso"', from=1-1, to=1-3]
	\arrow["{\mu}", from=1-3, to=2-3]
	\arrow["{\id \times v}"', from=1-1, to=2-1]
	\arrow["{\ell}"', "\iso", from=2-1, to=2-2]
\end{tikzcd}
\end{equation}
\begin{equation*}\label{WeakeningAxiom}
  \begin{tikzcd}[ampersand replacement=\&]
  {A \times A} \& 
  \\
  {T\pt(A)} \& A 
  \arrow["{\pi_1}", from=1-1, to=2-2]
	\arrow["{\eta\times \id}"', from=1-1, to=2-1]
	\arrow["s"', from=2-1, to=2-2]
\end{tikzcd}
\end{equation*}
\[\begin{tikzcd}[column sep = huge, ampersand replacement=\&]
	{T\icomp T\pt(A)\times A} \&
  {T\pt\icomp T(A)\times A} \& 
  {T\pt\icomp T\pt(A)} \& {T\pt(A)} 
  \\
	{T\pt(A)} \&\&\& A
	\arrow["{\underline\varsigma\times\id}", "\iso"', from=1-1, to=1-2]
	\arrow["s"', from=2-1, to=2-4]
	\arrow["{\str\pt}", from=1-2, to=1-3]
	\arrow["{T(s)\times\id}"', from=1-1, to=2-1]
	\arrow["{T\pt(s)}", from=1-3, to=1-4]
	\arrow["s", from=1-4, to=2-4]
\end{tikzcd}\]
\end{definition}

Diagram~(\ref{LeftVarAxiom}) may be naturally considered a left-unit law.  As
we now show, diagram~(\ref{ContractAxiom}) is equivalent to a right-unit law.

\begin{proposition}
Diagram~(\ref{ContractAxiom}) commutes if, and only if, so does the following
one:
\begin{equation}\label{EvalAxiom}
\begin{tikzcd}[ampersand replacement=\&]
  \& T(A)\times 1 \& 
  \\
  T(A)\times T(A) \& T\icomp T\pt(A) \& T(A)
	\arrow["\id\times v"', from=1-2, to=2-1]
	\arrow["\str'"', from=2-1, to=2-2]
	\arrow["T(s)"', from=2-2, to=2-3]
	\arrow["\pi_1", "\iso"', from=1-2, to=2-3]
\end{tikzcd}
\end{equation}
\end{proposition}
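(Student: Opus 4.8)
The plan is to strip each diagram of its invertible leg and then play the monad unit laws against the monoidal coherence of $\ell$. Both (\ref{ContractAxiom}) and (\ref{EvalAxiom}) carry a leg marked $\iso$ (the projection $\pi_1$ out of a $(-)\times 1$), so composing with its inverse $\langle\id,{!}\rangle$ turns each diagram into a plain equality of morphisms. Writing $\phi = T(s)\circ\ell_{T(A),A}\circ\langle\id_{T^2(A)},\, v\circ{!}\rangle\colon T^2(A)\to T(A)$ and, after unfolding $\str'_{T(A),A} = \ell_{T(A),A}\circ(\eta_{T(A)}\times\id)$, $q = T(s)\circ\ell_{T(A),A}\circ\langle\eta_{T(A)},\, v\circ{!}\rangle\colon T(A)\to T(A)$, the reduction shows that (\ref{ContractAxiom}) says exactly $\phi = \mu_A$ while (\ref{EvalAxiom}) says exactly $q = \id_{T(A)}$.

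First I would record the elementary bookkeeping identity $q = \phi\circ\eta_{T(A)}$: since ${!}\circ\eta_{T(A)} = {!}$ and pairings compose on the nose, $\langle\eta_{T(A)},\, v\circ{!}\rangle = \langle\id_{T^2(A)},\, v\circ{!}\rangle\circ\eta_{T(A)}$. This already gives the forward implication with no further work, for if $\phi = \mu_A$ then $q = \mu_A\circ\eta_{T(A)} = \id_{T(A)}$ by the monad unit law.

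For the converse the crux is the single structural identity $\phi = \mu_A\circ T(q)$; granting it, $q = \id$ forces $\phi = \mu_A\circ T(\id) = \mu_A$, and together with $q = \phi\circ\eta_{T(A)}$ and the unit law this makes both directions purely formal. To prove $\phi = \mu_A\circ T(q)$ I would expand $\mu_A\circ T(q) = \mu_A\circ T^2(s)\circ T(\ell_{T(A),A})\circ T\langle\eta_{T(A)},\, v\circ{!}\rangle$ and push $\mu$ inward by naturality, $\mu_A\circ T^2(s) = T(s)\circ\mu_{T\pt(A)}$. As $T$ is cartesian monoidal, $\ell$ and $e$ are invertible, so $T$ preserves finite products and $T\langle\eta_{T(A)},\, v\circ{!}\rangle = \ell_{T^2(A),T(A)}\circ\langle T\eta_{T(A)},\, T(v\circ{!})\rangle$; this exposes the pattern $\mu_{T\pt(A)}\circ T(\ell_{T(A),A})\circ\ell_{T^2(A),T(A)}$, which the multiplication coherence of the lax monoidal structure rewrites as $\ell_{T(A),A}\circ(\mu_{T(A)}\times\mu_A)$. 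The unit law $\mu_{T(A)}\circ T\eta_{T(A)} = \id$ collapses the first coordinate to $\id_{T^2(A)}$, while the auxiliary fact $\mu_A\circ T(v) = v\circ{!}_{T(1)}$ — which holds because $T(1)$ is terminal, hence $\eta_1$ is invertible, after which naturality of $\eta$ and the unit law apply — collapses the second to $v\circ{!}_{T^2(A)}$; what remains is exactly $\phi$.

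The main obstacle is precisely this identity $\phi = \mu_A\circ T(q)$: it is the only place where the \emph{monoidal} structure of the monad, rather than merely its being a monad, is used, and it requires threading the multiplication coherence for $\ell$, the two monad unit laws, and product-preservation in the correct order. Everything else — the reduction via the invertible legs, the identity $q = \phi\circ\eta_{T(A)}$, and the final two-line deduction — is routine.
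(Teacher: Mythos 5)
Your argument is correct and follows essentially the same route as the paper: the forward direction is precomposition with $\eta_{T(A)}$ (the paper precomposes with $\eta_T\times\id$), and your key identity $\phi=\mu_A\circ T(q)$ packages exactly the content of the paper's pasting diagram for the converse --- naturality of $\mu$, the multiplication coherence for $\ell$, the two unit laws, product preservation, and the auxiliary fact $\mu_A\circ T(v)=v\circ{!}$ obtained from naturality of $\eta$. There is no gap; your equational organisation around $q=\phi\circ\eta_{T(A)}$ and $\phi=\mu_A\circ T(q)$ is just a linearised rendering of the same diagram chase.
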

\begin{proof}
($\Rightarrow$)
Diagram~(\ref{EvalAxiom}) is readily obtained from
diagram~(\ref{ContractAxiom}) by precomposition with the morphism 
$\eta_{T}\times\id:T(A)\times 1\to T^2(A)\times 1$.

($\Leftarrow$)
We provide a diagrammatic proof.
\[\begin{tikzcd}[ampersand replacement=\&,cramped,column sep=scriptsize, row
    sep=large] 
  \& {T^2(A) \times 1} 
	\\
	{T^2(A) \times T(A)} \& {T^2(A) \times T(1)} \& {T(T(A) \times 1)} 
  \\
	\& {T^2(A) \times T^2(A)} \& {T(T(A) \times T(A))} \& {T^2T^\bullet(A)} 
  \& {T^2(A)} 
  \\
	\& {T^3(A) \times T^2(A)} \& {T(T^2(A)\times T(A))} 
  \\
	{T^2(A) \times T(A)} \&\&\& {TT^\bullet(A)} \& {T(A)}
	\arrow["{\mathrm{id} \times v}"', from=1-2, to=2-1]
	\arrow["{\mathrm{id} \times \eta_1}"{pos=0.7}, from=1-2, to=2-2]
	\arrow["\cong"'{pos=0.65}, draw=none, from=1-2, to=2-2]
	\arrow["{\mathrm{str}}"{pos=0.4}, from=1-2, to=2-3]
	\arrow["\cong"'{pos=0.5}, draw=none, from=1-2, to=2-3]
	\arrow["{\pi_1}", curve={height=-50pt}, from=1-2, to=3-5]
	\arrow["\cong"', curve={height=-50pt}, draw=none, from=1-2, to=3-5]
	\arrow["{\mathrm{id}\times \eta_T}"', from=2-1, to=3-2]
	\arrow["{\mathrm{id} \times \mathrm{id}}"', from=2-1, to=5-1]
	\arrow["\ell", from=2-2, to=2-3]
	\arrow["\cong"', draw=none, from=2-2, to=2-3]
	\arrow["{\mathrm{id} \times T(v)}"', from=2-2, to=3-2]
	\arrow["{T(\mathrm{id} \times v)}"', from=2-3, to=3-3]
	\arrow["{T(\pi_1)}"{pos=0.25}, from=2-3, to=3-5]
	\arrow["\cong"'{pos=0.3}, draw=none, from=2-3, to=3-5]
	\arrow["\ell", from=3-2, to=3-3]
	\arrow["\cong"', draw=none, from=3-2, to=3-3]
	\arrow["{T(\eta_T) \times \mathrm{id}}"', from=3-2, to=4-2]
	\arrow["{T(\mathrm{str}')}", from=3-3, to=3-4]
        \arrow["{T(\eta_T \times \id)}"', from=3-3, to=4-3]
	\arrow["{T^2(s)}"', from=3-4, to=3-5]
	\arrow["{\mu_{T^\bullet}}"', from=3-4, to=5-4]
	\arrow["\mu", from=3-5, to=5-5]
	\arrow["\ell"', from=4-2, to=4-3]
	\arrow["\cong", draw=none, from=4-2, to=4-3]
	\arrow["{\mu_T\times \mu}"', from=4-2, to=5-1]
	\arrow["{T(\ell)}"', from=4-3, to=3-4]
	\arrow["\cong", draw=none, from=4-3, to=3-4]
	\arrow["\ell"', from=5-1, to=5-4]
	\arrow["\cong", draw=none, from=5-1, to=5-4]
	\arrow["{T(s)}"', from=5-4, to=5-5]
\end{tikzcd}\]
\end{proof}

\begin{theorem}\label{Theorem:AxiomatizationEquivalence}
$T$-substitution algebras may be equivalently axiomatized by replacing 
diagram~(\ref{ContractAxiom}) with diagram~(\ref{EvalAxiom}).
\end{theorem}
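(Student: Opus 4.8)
The plan is to obtain the theorem as an immediate consequence of the preceding Proposition. That Proposition establishes, for a fixed object $A$ together with morphisms $s \colon T\pt(A) \to A$ and $v \colon 1 \to T(A)$, that diagram~(\ref{ContractAxiom}) commutes if and only if diagram~(\ref{EvalAxiom}) does. The first thing I would stress is that this biconditional is \emph{unconditional} in the triple $(A, s, v)$: inspecting the two directions of its proof, the ($\Rightarrow$) direction is pure precomposition with $\eta_T \times \id$ together with the monad-unit identity $\mu \circ \eta_T = \id$, and the ($\Leftarrow$) diagram is built entirely from naturality of $\ell$, $\eta$, and $\mu$, the coherence of the strengths $\str$ and $\str'$ with $\ell$, and the functoriality of $T$ applied to $s$. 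At no point are the remaining defining diagrams~(\ref{LeftVarAxiom}), (\ref{WeakeningAxiom}), or the associativity square invoked; the morphism $s$ enters only through $T(s)$ and $T^2(s)$.

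Granting this, the argument is then purely logical. The notion of $T$-substitution algebra is the conjunction of the four displayed diagrams. Replacing the conjunct~(\ref{ContractAxiom}) by the conjunct~(\ref{EvalAxiom}), where the two are equivalent uniformly over all candidate $(A, s, v)$, does not alter the class cut out by the conjunction: a triple validates (\ref{LeftVarAxiom}), (\ref{ContractAxiom}), (\ref{WeakeningAxiom}) and associativity precisely when it validates (\ref{LeftVarAxiom}), (\ref{EvalAxiom}), (\ref{WeakeningAxiom}) and associativity. Since the data $(A, s, v)$ and hence the evident notion of structure-preserving morphism are untouched by the swap, the two axiomatizations present literally the same objects and the same morphisms, so in particular the same category of $T$-substitution algebras.

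The only genuine point of care --- the step I would treat as the crux --- is the verification, flagged above, that the Proposition's equivalence really does hold independently of the other three axioms, i.e.\ that none of them was tacitly used in its proof. I would discharge this by checking cell-by-cell that every region of the ($\Leftarrow$) diagram commutes for structural reasons alone, so that $s$ and $v$ may be arbitrary. Once this is confirmed, there is nothing further to compute: the theorem is a restatement of the Proposition inside the full definition, and no new diagram chase is required.
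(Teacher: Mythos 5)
Your proposal is correct and takes essentially the same route as the paper: the theorem is recorded there as an immediate consequence of the preceding Proposition, whose two directions are proved using only the monad laws, naturality, and the strength/monoidal structure, so the biconditional indeed holds uniformly in $(A,s,v)$ without appeal to the remaining axioms. Your explicit flagging of that uniformity is exactly the (implicit) content of the paper's deduction, and nothing further is needed.
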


\begin{definition}
A homomorphism between $T$-substitution algebras $(A,s,v)$ and $(A',s',v')$ is
a morphism $h: A \to A'$ such that the following diagrams commute:
\begin{center}
$\begin{tikzcd}[ampersand replacement=\&]
	1 \& {T(A)} 
  \\
	\& {T(A')} 
	\arrow["v", from=1-1, to=1-2]
	\arrow["{v'}"', from=1-1, to=2-2]
	\arrow["{T(h)}", from=1-2, to=2-2]
\end{tikzcd}$
\qquad\qquad\qquad
$\begin{tikzcd}[ampersand replacement=\&]
  {T\pt(A)} \& A \\
  {T\pt(A')} \& {A'}
	\arrow["s", from=1-1, to=1-2]
	\arrow["{T\pt(h)}"', from=1-1, to=2-1]
	\arrow["{s'}"', from=2-1, to=2-2]
	\arrow["h", from=1-2, to=2-2]
\end{tikzcd}$
\end{center}
\end{definition}

\medskip
We are interested here in the specific category of substitution algebras
defined below.  In the following, recall Example~\ref{Example:MainExample}.

\begin{definition}
We let $\SubstAlg$ be the category of substitution algebras and homomophisms
between them for the cartesian-monoidal symmetric monad $(-)^V$ on the
object-classifier topos~$\FF=\Set^{\F}$, for $V = \F(1,-)$ the universal
object model.  
\end{definition}

\begin{remark}
The equivalence of Theorem~\ref{Theorem:AxiomatizationEquivalence} in the
context of $\SubstAlg$ accounts for the respective axiomatizations considered
in~\cite{FPT} and in~\cite{FioreStaton}.
\end{remark}

\subsection{} 
Since the universal object model $V\in\FF$ is the representable at $1\in\F$,
which with the structure $(\cont:2\to1,\weak:0\to1,\symm:2\to2)$ in $\F$ is
the universal symmetric monoid 
(recall~\ref{Subsection:MainSymmetricMonoid}), 
the exponentiation by $V$ symmetric monad is equivalently described by
$(\delta,\moncont,\monweak,\monswap)$ given as 
follows \big(see
Example~\ref{Example:SymmetricMonoidMonad}(\ref{Example:ConcreteSymmetricMonoidMonad})\big):
\begin{align*}
\delta(A) =\ 
& A(-+1) 
\\[1mm]
\moncont_A = \ & A(- + \cont) : A(-+2) \to A(-+1)
\\[1mm]
\monweak_A = \ & A(- + \weak) : A(-) \to  A(-+1)
\\[1mm]
\monswap_A = \ & A(- + \swap) : A(-+2)\to A(-+2)
\end{align*}
We 
henceforth adopt this representation.  In particular, substitution-algebra
structure $s:A^V\times A\to A$ and $v: 1\to A^V$ on $A\in\FF$ is therefore
given by natural transformations
\[
  s_m : A(m+1)\times A(m) \to A(m)
  \enspace,\quad
  v_m : 1 \to A(m+1)
  \qquad\qquad
  (m\in\F)
\]

\subsection{Equational presentation} \label{SAEquationalPresentation}
It follows from the above that the category of substitution algebras 
$\SubstAlg$ in the object-classifier topos $\FF$ has a countably-sorted
equational presentation.  
Indeed, this has set of sorts $\Nat$ and operators 
\[\begin{array}{lll}
  \alpha_f : & m \to n & \quad\big(m,n\in\Nat, f\in\F(m,n)\big) 
  \\[1mm]
  \varsigma_m : & m+1,m \to m & \quad(m\in\Nat) 
  \\[1mm]
  \nu : & 1 & 
\end{array}\]
subject, for all $\ell,m,n\in\Nat$, $g\in\F(\ell,m)$, $f\in\F(m,n)$, to the
equations:
\[\begin{array}{l}
  \begin{array}{l}
  x : \ell \ \vdash \alpha_f\big(\alpha_g(x)\big) = \alpha_{f g}(x)
  : n 
  \end{array}
  \\[2.5mm]
  \begin{array}{l}
  x : m \ \vdash \alpha_{\id_m}(x) = x : m 
  \end{array}
  \\[2.5mm]
  \begin{array}{l}
  x : m+1, y : m \
  \vdash
  \alpha_f\big(\varsigma_m(x,y)\big) 
  = 
  \varsigma_n\big(\alpha_{f+\id_1}(x),\alpha_f(y)\big) 
  : n 
  \end{array}
  \\[2.5mm]
  \begin{array}{l}
  x : m \ \vdash \varsigma_m\big(\nu_m,x\big) = x : m
  \end{array}
  \\[2.5mm]
  \begin{array}{l}
  x : m+2 \ 
  \vdash 
  \varsigma_{m+1}\big(x,\nu_m\big) = \alpha_{\id_m+\cont}(x)
  : m+1
  \end{array}
  \\[2.5mm]
  \begin{array}{l}
  x : m , y : m \ \vdash \varsigma_m\big(\alpha_{\id_m+\weak}(x),y\big) = x : m
  \end{array}
  \\[2.5mm]
  \begin{array}{l}
  x : m+2, y : m+1 , z : m \
  \\[1mm]
  \quad\vdash \ 
  \varsigma_m\big(\varsigma_{m+1}(x,y),z\big)
  =
  \varsigma_m
  \big( \varsigma_{m+1}
          \big( \alpha_{\id_m+\swap}(x) 
                ,
                \alpha_{\id_m+\weak}(z) \big)
      , 
      \varsigma_m(y,z) \big)
  : m
  \end{array}
\end{array}\]
where $\nu_m =  \alpha_{(0\,\mapsto m)}(\nu) : m+1$.

The operators $\alpha_f$ together with the first two equations correspond
to the presheaf structure of objects in $\FF$, the next equation corresponds
to the naturality of the substitution operation as embodied by the operators
$\varsigma_m$, and the last four equations correspond to the laws of
substitution algebras.

\begin{corollary}
The category of substitution algebras is countably-sorted algebraic.
\end{corollary}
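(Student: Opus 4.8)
The statement is in essence a corollary of the equational presentation just exhibited, so the plan is to verify that the category of models of that presentation is equivalent to $\SubstAlg$; countably-sorted algebraicity then follows by definition, since the presentation has countably many sorts (the set $\Nat$), countably many operators, and countably many equation-instances, and is moreover finitary (the operators $\alpha_f$, $\varsigma_m$, $\nu$ carry finite arities $1$, $2$, $0$ respectively).

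First I would set up the correspondence on objects. A model is a family $\setof{M_n}_{n\in\Nat}$ together with interpretations of the operators satisfying the seven equations. Putting $A(n)=M_n$ and letting $f\in\F(m,n)$ act by $\alpha_f$, the first two equations say exactly that $A$ is functorial, i.e.\ an object of $\FF=\Set^\F$. The binary operators $\varsigma_m$ assemble into $s_m:A(m+1)\times A(m)\to A(m)$, and — using the concrete description $T(A)(f)=A(f+\id_1)$ of the monad $T=(-)^V$ — the naturality square for $s\colon T\pt(A)\to A$ is precisely the third equation. The constant $\nu\in A(1)$ together with the derived $\nu_m=\alpha_{(0\mapsto m)}(\nu)$ yields $v\colon 1\to T(A)$, whose naturality is automatic from the first equation, since $(f+\id_1)\comp(0\mapsto m)=(0\mapsto n)$ in $\F$.

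Next I would match the four substitution-algebra axioms of Definition~\ref{Definition:SubstitutionAlgebra} to the last four equations, reading off the monad structure in the form $(\delta,\moncont,\monweak,\monswap)$ of Example~\ref{Example:SymmetricMonoidMonad}(\ref{Example:ConcreteSymmetricMonoidMonad}), so that $\mu=A(-+\cont)$, $\eta=A(-+\weak)$, and $\varsigma=A(-+\swap)$ become the operators $\alpha_{\id_m+\cont}$, $\alpha_{\id_m+\weak}$, $\alpha_{\id_m+\swap}$ respectively. Evaluating the left-unit diagram at sort $m$ gives the fourth equation $\varsigma_m(\nu_m,x)=x$; the contraction diagram, once $\ell$ is recognised as the identity (the monad being cartesian), gives the fifth equation through $\mu=\alpha_{\id_m+\cont}$; and the weakening diagram gives the sixth equation through $\eta=\alpha_{\id_m+\weak}$. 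Finally, homomorphisms of models are natural transformations compatible with all operators: compatibility with the $\alpha_f$ is automatic, while compatibility with $\varsigma_m$ and $\nu$ is exactly the defining pair of diagrams for a $\SubstAlg$-homomorphism. This yields the sought equivalence of categories, and hence the corollary.

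The one genuinely delicate verification — the step I expect to be the main obstacle — is the associativity diagram and its translation into the seventh equation. Here one must unwind the defining composite pointwise at sort $m$: the symmetric distributive law $\underline\varsigma$ of Example~\ref{Example:MainExampleContinued} contributes the factor $\alpha_{\id_m+\swap}(x)$ via $\varsigma=A(-+\swap)$, while the strength $\str\pt$ of $T\pt$ from the same example contributes, through its diagonal together with the unit $\eta=A(-+\weak)$, the factor $\alpha_{\id_m+\weak}(z)$ on the $z$-argument; applying $T\pt(s)$ and then $s$ then reproduces exactly the right-hand side $\varsigma_m(\varsigma_{m+1}(\alpha_{\id_m+\swap}(x),\alpha_{\id_m+\weak}(z)),\varsigma_m(y,z))$. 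Tracking the coherence isomorphisms and the precise placement of the swapped and weakened arguments along this chain is the crux; the remaining three equations are routine by comparison.
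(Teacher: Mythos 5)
Your proposal is correct and follows essentially the same route as the paper: the corollary is read off directly from the countably-sorted presentation of~\ref{SAEquationalPresentation}, with the first two equations giving the presheaf structure, the third giving naturality of $s$ (and that of $v$ being automatic), and the last four matching the axioms of Definition~\ref{Definition:SubstitutionAlgebra} under the pointwise description $(\delta,\moncont,\monweak,\monswap)$ of the monad. The paper states this correspondence without spelling out the verifications you sketch, but the underlying argument is the same.
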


\section{Isomorphism theorem}
\label{Section:IsomorphismTheorem}

We show that the categories of substitution algebras $\SubstAlg$ and of
abstract clones $\AbsClon$ are isomorphic.  Since, as recalled in the
introduction, the category of Lawvere theories is equivalent to that of
abstract clones, 
this establishes the main aim of the paper in exhibiting a finite equational
presentation of Lawvere theories in the object-classifier topos.

We will construct inverse functors $\ACtoSA : \AbsClon \to \SubstAlg$ and 
$\SAtoAC: \SubstAlg \to \AbsClon$. 
The idea of these constructions is simple:  For abstract clones and
substitution algebras, respectively understood as modelling simultaneous and
single-variable substitution, the first functor expresses single-variable
substitution as a special case of simultaneous substitution while the second
functor expresses simultaneous substitution as iterated application of
single-variable substitution.  

\subsection{From abstract clones to substitution algebras}\phantom{}

\medskip\noindent
Let $(C, \mu, \iota)$ be an abstract clone. 

\subsubsection{}
For $f\in\F(m,n)$, define
\[
  C(f) : C_m \to C_n 
  : t \mapsto \mu_{m,n}\big(t,\iota^n_{f(0)},\ldots,\iota^n_{f(m-1)}\big)
\]
Then, the families of sets $\setof{\, C_m \,}_{m\in\Nat}$ and of functions
$\setof{\, C(f) : C_m \to C_n \,}_{m,n\in\Nat, f\in\F(m,n)}$ determine a
presheaf $C$ in $\FF$.

\subsubsection{}
For $m\in\Nat$, define
\[
  v_m = \iota^{m+1}_{m}\in C(m+1)
\]
Then, the family of elements $\setof{\, v_m \in C(m+1) \,}_{m\in\Nat}$
determines a natural transformation $v: 1\to\delta(C)$ in $\FF$.

\subsubsection{}
For $m\in\Nat$, define
\[
  s_m : C(m+1) \times C(m) \to C(m) 
  : (t,u) 
    \mapsto 
    \mu_{m+1,m}\big( t, \iota^m_{0}, \ldots, \iota^m_{m-1}, u \big)
\]
Then, the family of functions 
$\setof{\, s_{m,n} : C(m+1) \times C(m) \to C(m) \,}_{m\in\Nat}$ determines a
natural transformation $s: \delta(C)\times C\to C$ in $\FF$.

\subsubsection{}
The structure $\ACtoSA(C,\mu,\iota)=(C,s,v)$ is a substitution algebra and for
an abstract-clone homomorphism $h:(C,\mu,\iota)\to(C',\mu',\iota')$, the
family of functions $\ACtoSA(h)=\setof{\, h_m : C(m) \to C'(m) \,}_{m\in\Nat}$
is a substitution-algebra homomorphism 
$\ACtoSA(C,\mu,\iota)\to \ACtoSA(C',\mu',\iota')$.  This construction defines
a functor $\ACtoSA:\AbsClon\to\SubstAlg$.

\subsection{From substitution algebras to abstract clones}\phantom{}

\medskip\noindent
Let $(A,s,v)$ be a substitution algebra.

\subsubsection{}
For $m,n\in\Nat$, define
\[
  \varphi_{m,n} : A(n+m)\times (A\,n)^m \to A(n)
\]
by induction as 
\[
  \varphi_{0,n} 
  = 
  \begin{tikzcd}
  A(n)\times(A\,n)^0 \ar[r,"\pi_1","\iso"'] & A(n)
  \end{tikzcd}
\]
and
\[\begin{tikzcd}[column sep = huge, row sep = 17.5pt, ampersand replacement=\&]
  A(n+m+1)\times(A\,n)^m\times A(n) \& A(n)
  \\
  A(n+m+1)\times A(n)\times(A\,n)^m \& 
  \\
  A(n+m+1)\times A(n+m)\times(A\,n)^m \& A(n+m)\times(A\,n)^m
	\arrow["\iso"', from=1-1, to=2-1]
	\arrow["\varphi_{m+1,n}", from=1-1, to=1-2]
	\arrow["\id\times A(i\,\mapsto i)\times\id"', from=2-1, to=3-1]
	\arrow["\varphi_{m,n}"', from=3-2, to=1-2]
	\arrow["s_{n+m}\times\id"', from=3-1, to=3-2]
\end{tikzcd}\]

\subsubsection{}
For $m,n\in\Nat$, define
\[
  \mu_{m,n} : A(m)\times (A\,n)^m \to A(n)
\]
as the composite
\[\begin{tikzcd}[column sep = large, ampersand replacement=\&]
  {A(m)\times (A\,n)^m} \&\& {A(n+m)\times (A\,n)^m} \& {A(n)}
  \arrow["A(i\,\mapsto n+i)\times\id", from=1-1, to=1-3]
  \arrow["\varphi_{m,n}", from=1-3, to=1-4]
\end{tikzcd}\]

\subsubsection{}
For $m\in\Nat$ and $i\in\ord m$, define 
\[
  \iota^m_i 
  = A\big(0\mapsto i\big)\big(\,v_0(\,)\,\big) 
  \in A(m)
\]

\subsubsection{}
The structure $\SAtoAC(A,s,v)=(A,\mu,\iota)$ is an abstract clone and for a
substitution-algebra homomorphism $h:(A,s,v)\to(A',s',v')$, the family of
functions $\SAtoAC(h)=\setof{\, h_m : A(m) \to A'(m)\,}_{m\in\Nat}$ is an
abstract-clone homomorphism $\SAtoAC(A,s,v)\to\SAtoAC(A',s',v')$.  This
construction defines a functor $\SAtoAC: \SubstAlg\to\AbsClon$.

\begin{theorem}
The functors $\ACtoSA: \AbsClon\to\SubstAlg$ and 
$\SAtoAC: \SubstAlg\to\AbsClon$ form an isomorphism of categories.  
\end{theorem}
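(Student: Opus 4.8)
The plan is to exploit the fact that both functors act as the identity on underlying carriers. Indeed, $\ACtoSA$ leaves the family of sets $\setof{\,C_m \suchthat m\in\Nat\,}$ untouched (merely reassembling it as a presheaf and attaching $s$ and $v$), and $\SAtoAC$ likewise leaves $\setof{\,A(m) \suchthat m\in\Nat\,}$ untouched; on morphisms, both functors simply pass along the family $\setof{\,h_m\,}$. Consequently the morphism components of both composites $\SAtoAC\comp\ACtoSA$ and $\ACtoSA\comp\SAtoAC$ are automatically the identity, and — given that each functor has already been checked to be well-defined — it suffices to verify the two object-level equalities $\SAtoAC\comp\ACtoSA=\Id_{\AbsClon}$ and $\ACtoSA\comp\SAtoAC=\Id_{\SubstAlg}$ on the derived algebraic structure.

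For $\SAtoAC\comp\ACtoSA=\Id_{\AbsClon}$, I would start from a clone $(C,\mu,\iota)$, form $\ACtoSA(C,\mu,\iota)=(C,s,v)$, and compute the reconstructed clone $(C,\mu',\iota')$. The projections round-trip immediately: $\iota'^{\,m}_i = C\big(0\mapsto i\big)(v_0) = \mu_{1,m}\big(\iota^1_0,\iota^m_i\big)=\iota^m_i$ by the projection axiom. The substance is to show $\mu'_{m,n}=\mu_{m,n}$. Since $\mu'_{m,n}$ is built from the auxiliary $\varphi_{m,n}$, which is defined by iterating $s$ — and $s$ is itself the clone multiplication against a pattern of projections-and-argument — I would prove by induction on $m$ that $\varphi_{m,n}$ implements the clone multiplication. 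The base case $m=0$ is the definitional fact $\mu'_{0,n}=C(!_n)=\mu_{0,n}$, while the inductive step peels off one application of $s_{n+m}$ (unwinding it to $\mu_{n+m+1,n+m}$ against $\iota^{n+m}_0,\dots,\iota^{n+m}_{n+m-1}$ and the reindexed last argument) and collapses the result via clone associativity together with the projection and right-unit axioms.

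For $\ACtoSA\comp\SAtoAC=\Id_{\SubstAlg}$, I would start from $(A,s,v)$, form $\SAtoAC(A,s,v)=(A,\mu,\iota)$, and reconstruct $(A,s',v')$. The generic variable round-trips by naturality: writing the reindexing $0\mapsto m$ as $!_m+\id_1$ for the unique $!_m:0\to m$ in $\F$, naturality of $v:1\to\delta(A)$ along $!_m$ gives $v'_m=\iota^{m+1}_m=A\big(!_m+\id_1\big)(v_0)=v_m$. For $s'=s$ I would unwind $s'_m(t,u)=\mu_{m+1,m}\big(t,\iota^m_0,\dots,\iota^m_{m-1},u\big)$ through the definition of $\mu$ via $\varphi$, and show that substituting the projection pattern collapses the iterated single-variable substitution back to $s_m(t,u)$: the left-unit ($v$-variable) axiom~(\ref{LeftVarAxiom}) discharges the position where the final argument is met, while the weakening axiom (``substitution for an absent variable has no effect'') discharges each position met by a projection, the associativity axiom organizing the iteration.

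I expect the main obstacle to be the index bookkeeping in these two inductions: carefully tracking the reindexing morphisms $A\big(i\mapsto n+i\big)$, $A\big(i\mapsto i\big)$, $A\big(\id_m+\cont\big)$, $A\big(\id_m+\weak\big)$, and the like, together with the naturality of $s$ and $v$, so that each inductive step matches exactly one clone or substitution-algebra axiom rather than an ad hoc combination. Once both inductions are discharged the two object-level identities hold, the morphism parts being identities as noted above; hence $\ACtoSA$ and $\SAtoAC$ are mutually inverse, yielding the claimed isomorphism of categories.
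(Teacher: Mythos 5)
Your overall strategy is the right one and is exactly what the paper leaves implicit: the paper states the theorem without a written proof, the constructions of $\ACtoSA$ and $\SAtoAC$ in Section~\ref{Section:IsomorphismTheorem} being the entire outline, so the content of a proof is precisely the two object-level round-trips you identify (the morphism parts being identities for free). Your treatment of $\SAtoAC\comp\ACtoSA=\Id_{\AbsClon}$ is sound: the $\iota$ and $v$ computations are correct as written, and the induction for $\mu'=\mu$ goes through with the axioms you name (associativity, projections, right unit), provided you state the induction hypothesis for the auxiliary map explicitly, e.g.\ $\varphi_{m,n}(x,u_0,\dots,u_{m-1})=\mu_{n+m,n}(x,\iota^n_0,\dots,\iota^n_{n-1},u_0,\dots,u_{m-1})$.

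The gap is in your axiom accounting for $s'=s$ in the direction $\ACtoSA\comp\SAtoAC=\Id_{\SubstAlg}$. Unwinding $\mu_{m+1,m}(t,\iota^m_0,\dots,\iota^m_{m-1},u)$ through $\varphi$, the element $t\in A(m+1)$ is first reindexed along $i\mapsto m+i$ into $A(2m+1)$, so its variables occupy the \emph{present} positions $m,\dots,2m$; the recursion then substitutes $u$ for position $2m$ and the projections $\iota^m_i$ for positions $2m-1,\dots,m$. These latter positions carry $t$'s own variables, so the weakening axiom (``substitution for an absent variable has no effect'') does not apply to them; nor is the term being substituted into a bare variable, so the left-unit axiom~(\ref{LeftVarAxiom}) does not apply either. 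What actually discharges each projection position is the contraction axiom~(\ref{ContractAxiom}) (equivalently~(\ref{EvalAxiom})) together with naturality of $s$ and $v$: substituting a variable for a present variable is the renaming $A(\id+\cont)$ conjugated by a transposition. After the $m$ renamings one is left with $s_{2m}$ applied to the shifted $t$ and the weakened $u$, which naturality of $s$ along the evident map $2m\to m$ collapses to $s_m(t,u)$. So the identity does hold, but via the contraction axiom and naturality rather than via the left-unit and weakening axioms; executed as written, your weakening step would fail.
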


\begin{corollary}
Substitution algebras provide a finite algebraic presentation of Lawvere
theories in the object-classifier topos.
\end{corollary}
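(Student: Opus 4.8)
The plan is to derive the Corollary by assembling the isomorphism Theorem above — which I take as given, since it is stated earlier — with two ingredients recalled earlier in the paper. First, $\AbsClon$ is equivalent to the category of Lawvere theories (introduction). Second, the introduction and Section~\ref{SAEquationalPresentation} exhibit the \emph{theory of substitution} over $\FF$, namely two operators of arity--coarity $(\Var+1,1)$ and $(0,\Var)$ subject to the four equational laws of Definition~\ref{Definition:SubstitutionAlgebra}, of which the countably-sorted presentation in Section~\ref{SAEquationalPresentation} is merely the many-sorted unfolding. Composing $\AbsClon\iso\SubstAlg$ with $\AbsClon\simeq\{\,\text{Lawvere theories}\,\}$ transports this presentation, so Lawvere theories are presented over the object-classifier topos by finitely many operators (two) and finitely many equations (four). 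The genuine mathematical content therefore lies entirely in the isomorphism Theorem, and I concentrate the proposal there.

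For the Theorem, the functors $\ACtoSA$ and $\SAtoAC$ are already defined on objects and morphisms, so two things remain: that each lands in its target category, and that the two are mutually inverse. Well-definedness of $\ACtoSA$ reduces to checking the four substitution-algebra diagrams for $(C,s,v)$; I would verify these sortwise at each $m\in\Nat$ by unfolding $s_m(t,u)=\mu_{m+1,m}(t,\iota^m_0,\dots,\iota^m_{m-1},u)$ and $v_m=\iota^{m+1}_m$, reducing each diagram to an instance of the clone associativity and unit equations. Well-definedness of $\SAtoAC$ is the more delicate half: the multiplication $\mu_{m,n}$ built from the iterated operator $\varphi_{m,n}$ must be shown to satisfy the clone associativity, projection, and unit axioms. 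Here I would argue by induction on $m$ along the inductive clause defining $\varphi$, invoking at each peeling-off of a variable the substitution-algebra axioms — crucially the associativity diagram together with the weakening and left-variable laws. Presheaf-compatibility in both directions is handled by the functoriality clauses $C(f)$ and $A(f)$ and, where convenient, by Theorem~\ref{Theorem:AxiomatizationEquivalence} via diagram~(\ref{EvalAxiom}).

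For mutual inverseness I would compute both round-trips. For $\SAtoAC\comp\ACtoSA=\Id_{\AbsClon}$: starting from $(C,\mu,\iota)$ the reconstructed projections are $A(0\mapsto i)(v_0)=\iota^m_i$ on the nose, and the reconstructed multiplication, obtained by running $\varphi$, must be shown by induction on $m$ to collapse the iterated single-variable substitutions back into one simultaneous substitution, again powered by clone associativity and the unit laws. For $\ACtoSA\comp\SAtoAC=\Id_{\SubstAlg}$: the reconstructed variable is $\iota^{m+1}_m=A(0\mapsto m)(v_0)$, which equals $v_m$ by naturality of $v$; and the reconstructed substitution $s'_m(t,u)=\mu_{m+1,m}(t,\iota^m_0,\dots,\iota^m_{m-1},u)$ must be shown, by unfolding the single inductive step of $\varphi$ after the shift reindexing, to reduce to the original $s_m(t,u)$. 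Since both functors act as the identity on the underlying morphism families, compatibility with homomorphisms is then automatic once the object-level identities are established.

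I expect the main obstacle to be the associativity correspondence: proving that iterated single-variable substitution, as encoded by the inductive $\varphi$, reassembles into a multiplication obeying clone associativity, and conversely that clone associativity collapses correctly under the round-trip. This draws on the full strength of the substitution-algebra associativity diagram — the one mediated by $\underline\varsigma$ and $\str\pt$ — and on careful bookkeeping of the reindexing maps $A(i\mapsto n+i)$ and $A(i\mapsto i)$, together with the interaction of substitution with weakening (absent variables) and the symmetry. Working sortwise through the presentation of Section~\ref{SAEquationalPresentation} should tame the diagram chases into ordinary inductions over the clone operators, but tracking which variable is substituted at each inductive stage is where the real care is required.
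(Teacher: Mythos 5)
Your proposal is correct and follows essentially the same route as the paper: the corollary is obtained immediately by composing the isomorphism $\AbsClon\iso\SubstAlg$ with the equivalence between Lawvere theories and abstract clones recalled in the introduction, and observing that $\SubstAlg$ is presented over $\FF$ by two operators of arity--coarity $(\Var+1,1)$ and $(0,\Var)$ subject to four equations. Your additional sketch of the isomorphism theorem itself (sortwise verification of the algebra axioms for $\ACtoSA$, induction along $\varphi$ for $\SAtoAC$, and the two round-trip computations) is consistent with the paper's outline, which likewise leaves those verifications to the reader.
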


\medskip
\subsection*{Acknowledgement}
We are grateful to the referee for comments that led to improving the paper.

%
%
%

\refs

\bibitem [{Ad\'amek, Rosick\'y, and Vitale}, 2011]{AdamekRosickyVitale} 
J Ad\'amek, J Rosick\'y, and E M Vitale,
Algebraic Theories: A Categorical Introduction to General Algebra.
Cambridge University Press, 2011.

\bibitem [Cohn, 1981]{Cohn} 
P M Cohn, 
Universal Algebra.  
Mathematics and its Applications Vol~6 (2nd Ed).  
D~Reidel Publishing Co, 1981.

\bibitem [Day, 1970]{Day}
B Day,
On closed categories of functors.
\emph{Reports of the {M}idwest {C}ategory {S}eminar, {IV}}, 
Lecture Notes in Mathemathics, Vol~137, pp~1--38.  
Springer, 1970. 

\bibitem [Fiore, 2005]{FOSSACS05}
M Fiore, 
Mathematical models of computational and combinatorial structures.
Invited talk at the {\em Foundations of Software Science and Computation
Structures (FOSSACS 2005) Conference}, 
Lecture Notes in Computer Science, Vol~3441, pp~25--46.  
Springer, 2005.  

\bibitem [Fiore, 2006]{MFPS06}
M Fiore, On the structure of substitution. 
Slides for an invited talk at the {\em 22nd Annual Conference on Mathematical
Foundations of Programming Semantics (MFPS XXII)}, 2006.

\bibitem [{Fiore, Plotkin, and Turi}, 1999]{FPT}
M Fiore, G Plotkin, and D Turi, 
Abstract syntax and variable binding.  
In {\em 14th {S}ymposium on {L}ogic in {C}omputer {S}cience}, pp~193--202.  
IEEE Computer Society, 1999.

\bibitem [{Fiore and Staton}, 2014]{FioreStaton}
M Fiore and S Staton,
Substitution, jumps, and algebraic effects.
In {\em Proceedings of the Joint Meeting of the Twenty-Third EACSL Annual
Conference on Computer Science Logic (CSL) and the Twenty-Ninth Annual
ACM/IEEE Symposium on Logic in Computer Science (LICS)}, ACM 2014.

\bibitem [Grandis, 2001]{Grandis}
M Grandis, 
Finite sets and symmetric simplicial sets.  
\emph{Theory and Applications of Categories}, Vol~8, No~8, pp~244--252, 2001.

\bibitem [{Gr\"atzer}, 2008]{Gratzer} 
G Gr\"atzer, 
Universal Algebra (2nd Ed). 
Springer, 2008.

\bibitem [Im and Kelly, 1986]{ImKelly}
G B Im and G M Kelly, 
A universal property of the convolution monoidal structure.
\emph{J~Pure Appl~Algebra}, Vol~43, No~1, pp~75--88, 1986. 
 
\bibitem [Kock, 1972]{Kock}
A Kock, 
Strong functors and monoidal monads. 
\emph{Arch Math} 23, pp~113-–120, 1972. 

\bibitem [Lawvere, 1963]{LawverePhDThesis} 
F W Lawvere, 
Functorial semantics of algebraic theories.
PhD thesis, Columbia University, 1963.
(Republished in: 
\emph{Reprints in Theory and Applications of Categories}, No~5, 2004.)

\bibitem [Lawvere, 1969]{LawvereOrdSum} 
F W Lawvere, 
Ordinal sums and equational doctrines.
In \emph{Seminar on Triples and Categorical Homology Theory}, 
Lecture Notes in Mathematics, Vol~80.  
Springer, 1969.

\bibitem [Taylor, 1973]{Taylor73} 
W Taylor, 
Characterizing Mal’cev conditions. 
\emph{Algebra Univ}, Vol~3, pp~51--397, 1973. 

\bibitem [Taylor, 1993]{Taylor93} 
W Taylor, 
Abstract Clone Theory. 
In {\em Algebras and Orders}, pp~507--530. 
Springer, 1993.

\bibitem [Wraith, 1969]{Wraith} 
G Wraith, Algebraic theories.  
Aarhus Lecture Notes Series, No~22, 1969. 

\endrefs

\end{document}